\theoremstyle{definition}
\newtheorem{defn}{Definition}
\theoremstyle{plain}
\newtheorem{theorem}{Theorem}
\newtheorem{proposition}{Proposition}
\DeclareMathOperator{\ind}{ind}
\begin{document}

\title{Automated detection of coherent Lagrangian vortices in two-dimensional unsteady flows}

\author{Daniel Karrasch
\and Florian Huhn
\and George Haller \and
Institute of Mechanical Systems\\ ETH Zürich, Leonhardstrasse 21\\ 8092 Zürich, Switzerland
}

\maketitle

\begin{abstract}
Coherent boundaries of Lagrangian vortices in fluid flows have recently
been identified as closed orbits of line fields associated
with the Cauchy--Green strain tensor. Here we develop a fully automated
procedure for the detection of such closed orbits in large-scale velocity
data sets. We illustrate the power of our method on ocean surface
velocities derived from satellite altimetry.
\end{abstract}

\textbf{Keywords:} Coherent Lagrangian vortices, Transport, Index theory, Line fields, Closed orbit detection, Ocean surface flows.

\section{Introduction}

Lagrangian coherent structures (LCS) are exceptional material surfaces
that act as cores of observed tracer patterns in fluid flows (see
\cite{Peacock2010} and \cite{Peacock2013} for reviews). For oceanic
flows, the tracers of interest include salinity, temperature, contaminants,
nutrients and plankton---quan\-ti\-ties that play an important role in
the ecosystem and even in climate. Fluxes of these quantities
are typically dominated by advective transport over diffusion.

An important component of advective transport in the ocean is governed
by mesoscale eddies, i.e., vortices of $100$--$200$\,km in diameter. While eddies also stir and mix surrounding water masses by their swirling motion, here we focus on eddies that trap and carry fluid in a coherent manner. Eddies of this kind include the Agulhas rings
of the Southern Ocean. They are known to transport massive quantities
of warm and salty water from the Indian Ocean into the Atlantic Ocean
\cite{Ruijter1999}.
Current limitations on computational power prevent that climate models resolve mesoscale eddies in their flow field. Since the effect of mesoscale eddies on the global circulation is significant \cite{Wolfe2009}, the correct parameterization of eddy transport is crucial for the reliability of these models. As a consequence, there is a rising interest in systematic and accurate eddy detection and census in large global data sets, as well as in quantifying the average transport of trapped fluid by all eddies in a given region \cite{Dong2014, Zhang2014, Petersen2013}.

This quantification requires (i) a rigorous method that provides specific coherent eddy boundaries, and (ii) a robust numerical implementation of the method on large velocity data sets.

A number of vortex definitions have been proposed in the literature, \cite{Haller2005,Zhang2014a},
most of which are of Eulerian type, i.e., use information from the instantaneous velocity field. Typical global eddy studies \cite{Chelton2007, Dong2014, Zhang2014, Petersen2013} are based on such Eulerian approaches. Evolving eddy boundaries obtained from Eulerian approaches, however, do not encircle and transport the same body of water coherently \cite{Haller2013a,Zhang2014a}. Instead, fluid initialized within an instantaneous Eulerian eddy boundary will generally stretch, fold and filament significantly. Yet only coherently transported scalars resist erosion by diffusion in a way that a sharp
signature in the tracer field is maintained. All this suggests that coherent eddy transport should ideally be analysed via Lagrangian methods that take into account the evolution of trajectories in the flow, such as, e.g., \cite{Provenzale1999,Allshouse2012,Mendoza2010, Rypina2011,Tallapragada2013,Prants2013}. Notably, however, none of these methods focuses on the detection of vortices and none provides an algorithm to extract exact eddy boundaries in unsteady velocity fields.

Only recently have mathematical approaches emerged for the detection
of coherent Lagrangian vortices. These include the geometric approach
\cite{Haller2005,Haller2013a} and the set-oriented approach \cite{Froyland2010a,Froyland2013,Froyland2012a}. Here, we follow
the geometric approach to coherent Lagrangian vortices, which defines
a coherent material vortex boundary as a closed stationary curve of
the averaged material strain \cite{Haller2013a}. All solutions
of this variational problem turn out to be closed material curves
that stretch uniformly. Such curves are practically found as closed orbits of appropriate planar line
fields \cite{Haller2013a}.

In contrast to vector fields, line fields are special vector bundles over
the plane. In their definition, only a one-dimensional subspace (line)
is specified at each point, as opposed to a vector at each point.
The importance of line field singularities in Lagrangian eddy detection
has been recognized in \cite{Haller2013a}, but has remained only
partially exploited. Here, we point out a topological rule that enables
a fully automated detection of coherent Lagrangian vortex boundaries
based on line field singularities. This in turn makes automated Lagrangian eddy detection feasible for large ocean regions.

Based on the geometric approach, coherent Lagrangian vortices have
so far been identified in oceanic data sets \cite{Beron-Vera2013,Haller2013a},
in a direct numerical simulation of the two-dimensional Navier--Stokes equations
\cite{Farazmand2014}, in a smooth area-preserving map \cite{Haller2012},
in a kinematic model of an oceanic jet in \cite{Haller2012}, and
in a model of a double gyre flow \cite{Onu2014}. With the exception
of \cite{Haller2013a}, however, these studies did not utilize the
topology of line field singularities. Furthermore, none of them offered
an automated procedure for Lagrangian vortex detection.

The orbit structure of line fields has already received considerable
attention in the scientific visualization community (see \cite{Delmarcelle1994a,Weickert2006} for reviews). The problem of
closed orbit detection has been posed in \cite[Section 5.2.3]{Delmarcelle1994a},
and was considered by \cite{Wischgoll2006},
building on \cite{Wischgoll2001}. In that approach,
numerical line field integration is used to identify cell chains that
may contain a closed orbit. Then, the conditions of the Poincar\'e--Bendixson
theorem are verified to conclude the existence of a closed orbit for
the line field. This approach, however, does not offer a systematic
way to search for closed orbits in large data sets arising in geophysical
applications.

This paper is organized as follows. In \prettyref{sec:Index_vf},
we recall the index theory of planar vector fields. In \prettyref{sec:Index_lf},
we review available results on indices for planar line fields, and
deduce a topological rule for generic singularities inside closed
orbits of such fields. Next, in \prettyref{sec:Application}, we present
an algorithm for the automated detection of closed line field orbits.
We then discuss related numerical results on ocean data, before presenting
our concluding remarks in \prettyref{sec:Conclusions}.

\section{Index theory for planar vector fields\label{sec:Index_vf}}

Here, we recall the definition and properties of the index
of a planar vector field \cite{Needham2000}. We denote the unit circle of the plane by $\mathcal{S}^{1}$, parametrized by the mapping $(\cos2\pi s,\sin2\pi s)\in\mathcal{S}^{1}\subset\mathds{R}^{2}$,
$s\in\left[0,1\right]$. In our notation, we do not distinguish between
a curve $\gamma\colon[a,b]\to\mathds{R}^{2}$ as a function and its image
as a subset of $\mathds{R}^{2}$.

\begin{defn}[Index of a vector field]
\label{def:index_vf} For a continuous, piecewise differentiable
planar vector field $\mathbf{v}\colon D\subseteq\mathds{R}^{2}\to\mathds{R}^{2}$
and a simple closed curve $\gamma\colon\mathcal{S}^{1}\to\mathds{R}^{2}$, let $\theta\colon\left[0,1\right]\to\mathds{R}$
be a continuous function such that $\theta(s)$ is the angle between
the $x$-axis and $\mathbf{v}(\gamma(s))$. Then, the \emph{index}
(or \emph{winding number}) \emph{of~ $\mathbf{v}$ along $\gamma$}
is defined as
\[
\ind_{\gamma}(\mathbf{v})\coloneqq\frac{1}{2\pi}\left(\theta(1)-\theta(0)\right),
\]
i.e., the number of turns of $\mathbf{v}$ during one anticlockwise
revolution along $\gamma$. Clearly, $\theta$ is well-defined only
if there is no \emph{critical point} of $\mathbf{v}$ along $\gamma$,
i.e., no point at which $\mathbf{v}$ vanishes.
\end{defn}

The index defined in \prettyref{def:index_vf} has two important properties \cite{Perko2001}:
\begin{enumerate}
\item \emph{Decomposition property}:
\[
\ind_{\gamma}(\mathbf{v})=\ind_{\gamma_{1}}\left(\mathbf{v}\right)+\ind_{\gamma_{2}}\left(\mathbf{v}\right),
\]
whenever $\gamma=\gamma_{1}\cup\gamma_{2}\setminus(\gamma_{1}\cap\gamma_{2})$,
and $\ind_{\gamma_{i}}\left(\mathbf{v}\right)$ are well-defined.
\item \emph{Homotopy invariance}:
\[
\ind_{\gamma}\left(\mathbf{v}\right)=\ind_{\tilde{\gamma}}\left(\mathbf{v}\right),
\]
whenever $\tilde{\gamma}$ can be obtained from $\gamma$ by a continuous
deformation (homotopy).
\end{enumerate}
If $\gamma$ encloses exactly one critical point $p$ of $\mathbf{v}$, then the \emph{index of $p$ with respect to $\mathbf{v}$},
\[
\ind\left(p,\mathbf{v}\right)\coloneqq\ind_{\gamma}\left(\mathbf{v}\right)
\]
is well-defined, because its definition does not depend on the particular
choice of the enclosing curve by homotopy invariance. Furthermore,
the index of $\gamma$ equals the sum over the indices of all enclosed
critical points, i.e.,
\[
\ind_{\gamma}\left(\mathbf{v}\right)=\sum_{i}\ind(p_{i},\mathbf{v}),
\]
provided all $p_{i}$ are isolated critical points. Finally, the index
of a closed orbit $\Gamma$ of the vector field $\mathbf{v}$ is equal
to $1$, because the vector field turns once along $\Gamma$. Therefore,
closed orbits of planar vector fields necessarily enclose critical
points.

\section{Index theory for planar line fields\label{sec:Index_lf}}

We now recall an extension of index theory from vector fields to line
fields \cite{Spivak1999}. Let $\mathbb{P}^{1}$ be the set of one-dimensional subspaces of $\mathds{R}^{2}$,
i.e., the set of lines through the origin $0\in\mathds{R}^{2}$. $\mathbb{P}^{1}$
is sometimes also called the \emph{projective line}, which can be
endowed with the structure of a one-dimensional smooth manifold \cite{Lee2012}.
This is achieved by parametrizing the lines via the $x$-coordinate
at which they intersect the horizontal line $y=1$. The horizontal
line $y=0$ is assigned the value $\infty$.

Equivalently, elements
of $\mathbb{P}^{1}$ can be parametrized by their intersection with
the upper semi-circle, denoted $\mathcal{S}_{+}^{1}$, with its right and left
endpoints identified. This means that lines through the origin are
represented by a unique normalized vector, pointing in the upper half-plane
and parametrized by the angle between the representative vector and
the $x$-axis (Fig.\ \ref{fig:circle-semi-circle}). A \emph{planar
line field} is then defined as a mapping $\mathbf{l}\colon D\subseteq\mathds{R}^{2}\to\mathbb{P}^{1}$,
with its differentiability defined with the help of the manifold structure
of $\mathbb{P}^{1}$.

\begin{figure}
\centering
\includegraphics[width=0.45\textwidth]{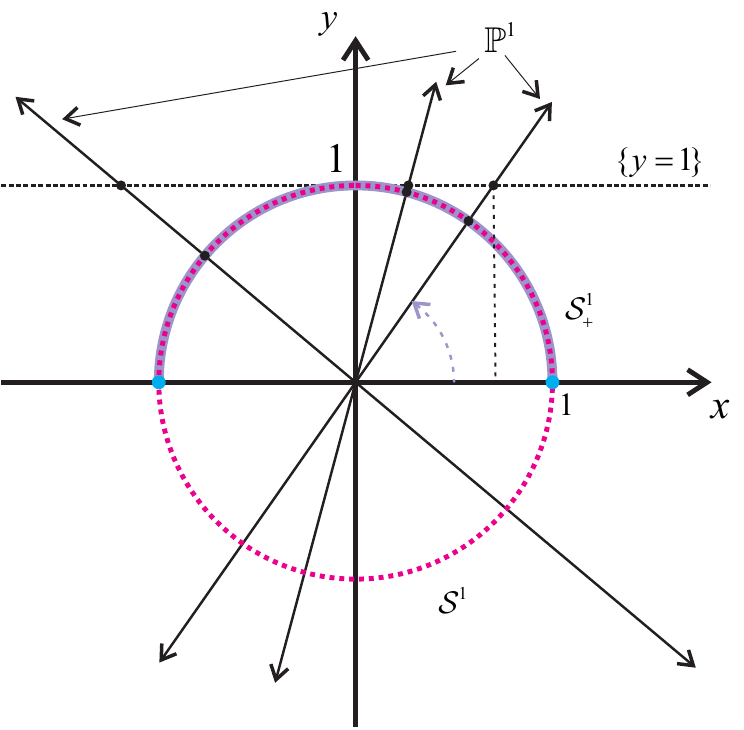}
\caption{The geometry of the projective line and its parametrization. The double-headed
arrows represent one-di\-men\-sion\-al subspaces of the plane, i.e., elements
of $\mathbb{P}^{1}$. The upper semi-circle $\mathcal{S}_{+}^{1}$ is shown
in purple, its end-points in cyan, and the unit circle $\mathcal{S}^1$ in dashed magenta.
The black points represent intersections of the lines with $y=1$
and with the unit circle, respectively.}
\label{fig:circle-semi-circle}
\end{figure}

Line fields arise in the computation of eigenvector fields for symmetric,
second-order tensor fields \cite{Delmarcelle1994,Tricoche2006}.
Eigenvectors have no intrinsic sign or length: only eigenspaces are
well-defined at each point of the plane. Their orientation depends
smoothly on their base point if the tensor field is smooth and has
simple eigenvalues at that point. At repeated eigenvalues, isolated
one-dimensional eigenspaces (and hence the corresponding values of the line field)
become undefined.

Points to which a line field cannot be extended continuously are called
\emph{singularities}. These points are analogous to critical points
of vector fields. Away from singularities, any smooth line field can
locally be endowed with a smooth orientation. This implies the local
existence of a normalized smooth vector field, which pointwise spans
the respective line. Conversely, away from critical
points, smooth vector fields induce smooth line fields when one takes their
linear span pointwise.

Based on the index for planar vector fields, we introduce a notion
of index for planar line fields following \cite{Spivak1999}. First,
for some differentiable line field $\mathbf{l}$ and along some closed
curve $\gamma\colon\mathcal{S}^{1}\to\mathds{R}^{2}$, pick at each point $\gamma(t)$
the representative upper half-plane vector from $\mathbf{l}(\gamma(t))$.
This choice yields a normalized vector field along $\gamma$ which
is as smooth as $\mathbf{l}$, except where $\mathbf{l}\circ\gamma$
crosses the horizontal subspace. At such a point, there is a jump-discontinuity
in the representative vector from right to left or vice versa. To
remove this discontinuity, the representative vectors are turned counter-clockwise
by $\alpha\colon\mathcal{S}_{+}^{1}\to\mathcal{S}^{1}$, $\left(\cos2\pi s\sin2\pi s\right)\mapsto\left(\cos4\pi s,\sin4\pi s\right)$,
$s\in[0,1/2]$, i.e., the parametrizing angle is doubled. Thereby,
the left end-point with angle $\pi$ is mapped onto the right end-point
with angle $0$. This representation $\alpha\circ\mathbf{l}$ permits
the extension of the notion of index to planar line fields as follows.

\begin{defn}[Index of a line field]\label{def:index_lf}
For a continuous, piecewise differentiable
planar line field $\mathbf{l}\colon D\subseteq\mathds{R}^{2}\to\mathbb{P}^{1}$
and a simple closed curve $\gamma\colon\mathcal{S}^{1}\to\mathds{R}^{2}$, we define
the \emph{index of~ $\mathbf{l}$ along $\gamma$} as
\[
\ind_{\gamma}(\mathbf{l})\coloneqq\frac{1}{2}\ind_{\gamma}(\alpha\circ\mathbf{l}).
\]
\end{defn}

The coefficient $1/2$ in this definition is needed to correct
the doubling effect of $\alpha$. It also makes the index for a line
field, generated by a vector field in the interior of $\gamma$, equal
to the index of that vector field. Since \prettyref{def:index_lf} refers to
\prettyref{def:index_vf}, the additional definitions and properties described
in \prettyref{sec:Index_vf} for vector fields carry over to line
fields.

We call a curve $\gamma$ an \emph{orbit} of $\mathbf{l}$, if it
is everywhere tangent to $\mathbf{l}$. The scientific visualization
community refers to orbits of lines fields arising from the eigenvectors
of a symmetric tensor as \emph{tensor (field) lines} or \emph{hyperorbit
(trajectories}) \cite{Delmarcelle1994,Tricoche2006,Wischgoll2006}.

By definition, the index of singularities of line fields can be a
half integer, as opposed to the vector field case, where only integer
indices are possible. Also, two new types of singularities emerge
in the line field case: \emph{wedges} (type $W$) of index $+1/2$,
and \emph{trisectors} (type $T$) of index $-1/2$ \cite{Delmarcelle1994,Tricoche2006}.
The geometry near these singularities is shown in Fig.\ \ref{fig:deg_points}.

\begin{figure}
\centering
\includegraphics[width=0.5\textwidth]{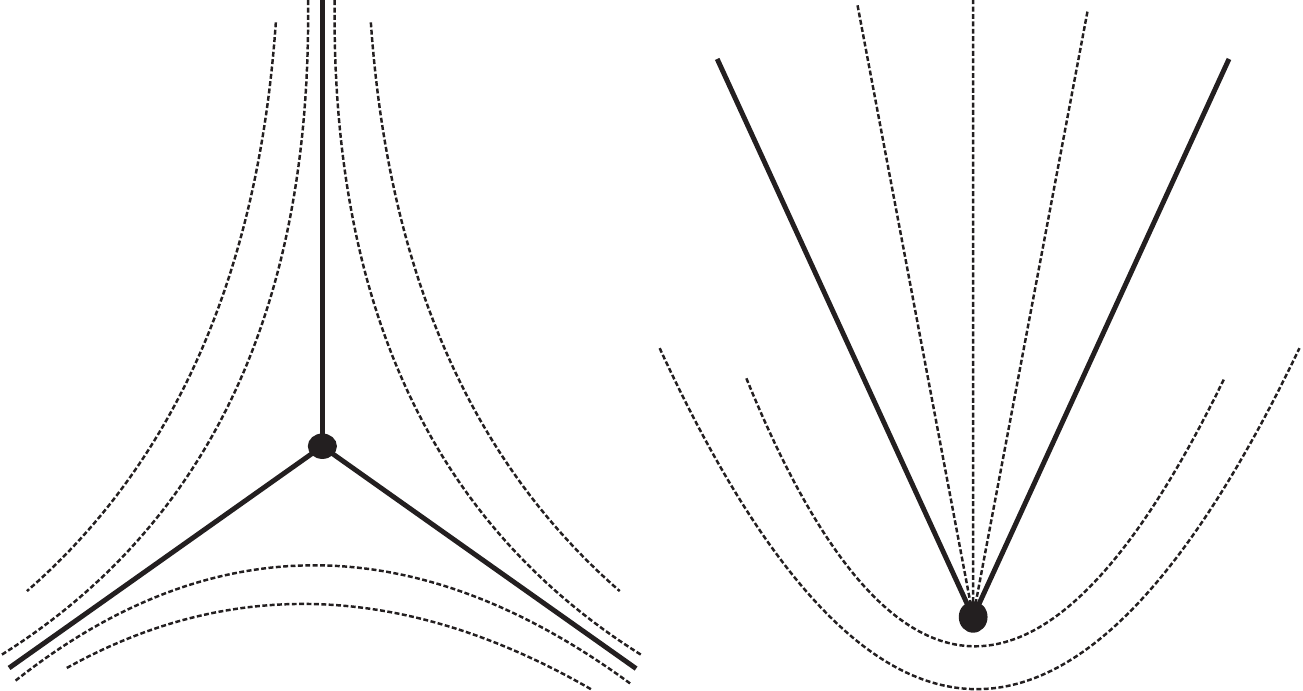}
\caption{Orbit topologies in the vicinity of the two generic line field
singularity types: trisector (left) and wedge (right). All lines represent
orbits, the solid lines correspond to boundaries of hyperbolic sectors.}
\label{fig:deg_points}
\end{figure}

Node, centre, focus and saddle singularities also exist for line fields,
but these singularities turn out to be structurally unstable with
respect to small perturbations to the line field \cite{Delmarcelle1994}.

In this paper, we assume that only \emph{isolated} singularities of
the \emph{generic} wedge and trisector types are present in the line
field of interest. In that case, we obtain the following topological
constraint on closed orbits of the line field.

\begin{theorem}\label{thm:topo_line}
Let $\mathbf{l}$ be a continuous, piecewise
differentiable line field with only structurally stable singularities.
Let $\Gamma$ be a closed orbit of~ $\mathbf{l}$, and let $D$ denote
the interior of $\Gamma$. We then have
\begin{equation}
W=T+2,\label{eq:toporule_line}
\end{equation}
where $W$ and $T$ denote the number of wedges and trisectors, respectively,
in $D$.
\end{theorem}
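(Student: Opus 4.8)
The plan is to compute the index $\ind_\Gamma(\mathbf{l})$ in two different ways and equate the results. On the one hand, $\Gamma$ is a closed orbit of $\mathbf{l}$, so the representative line turns exactly once as we traverse $\Gamma$ once anticlockwise; hence $\ind_\Gamma(\mathbf{l})=1$, exactly as for closed orbits of vector fields. (One should be slightly careful here: the line turns once, so $\alpha\circ\mathbf{l}$ turns twice along $\Gamma$, giving $\ind_\Gamma(\alpha\circ\mathbf{l})=2$ and hence $\ind_\Gamma(\mathbf{l})=\tfrac12\cdot 2=1$ by \prettyref{def:index_lf}.) On the other hand, by the decomposition property and homotopy invariance carried over from \prettyref{sec:Index_vf} to line fields, $\ind_\Gamma(\mathbf{l})$ equals the sum of the indices of all singularities of $\mathbf{l}$ enclosed by $\Gamma$. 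Since the only structurally stable singularities are wedges (index $+1/2$) and trisectors (index $-1/2$), and by hypothesis these are the only ones present, this sum is $\tfrac12 W - \tfrac12 T$.

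Equating the two computations gives $\tfrac12 W - \tfrac12 T = 1$, i.e. $W - T = 2$, which is exactly \eqref{eq:toporule_line}. So the argument is short modulo two facts: (i) that a closed orbit of a line field has index $1$, and (ii) that the index of $\Gamma$ is additive over the enclosed singularities.

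For (ii) I would invoke the standard argument: $\Gamma$ together with the enclosed singularities can be handled by surrounding each singularity $p_i$ by a small circle $\gamma_i$, connecting these circles to $\Gamma$ by thin corridors, and applying the decomposition property repeatedly; the corridor contributions cancel in pairs, leaving $\ind_\Gamma(\mathbf{l})=\sum_i\ind(p_i,\mathbf{l})$. This requires that there are only finitely many singularities inside $D$ and that they are isolated, which follows from structural stability (isolated wedges and trisectors), and implicitly that $D$ is simply connected — true since $\Gamma$ is a simple closed curve and $D$ is its interior by the Jordan curve theorem.

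The main obstacle, and the only point needing genuine care, is establishing (i): that the index of a closed orbit of a \emph{line} field is $1$ and not, say, $1/2$. For a vector field this is immediate because the tangent vector returns to itself after one loop; for a line field one must argue that the tangent \emph{line} also returns after one loop without any net half-turn ambiguity, i.e. that lifting the orbit's tangent line to $\mathcal S^1_+$ and doubling the angle via $\alpha$ produces a genuine double cover of the circle along $\Gamma$. Concretely, one can parametrize $\Gamma$ by arclength, pick a smooth local orientation of $\mathbf{l}$ along $\Gamma$ (possible away from singularities, and $\Gamma$ avoids singularities since it is an orbit), obtaining a genuine unit tangent vector field along $\Gamma$ whose winding number is $1$ by the vector-field computation; then check that passing to $\alpha\circ\mathbf{l}$ and back via the factor $1/2$ reproduces the same value. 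The subtlety is purely the bookkeeping of the $\mathbb P^1$-to-$\mathcal S^1$ doubling, and once that is pinned down the theorem follows.
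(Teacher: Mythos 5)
Your proposal is correct and follows essentially the same route as the paper's proof: compute $\ind_\Gamma(\mathbf{l})=1$ for the closed orbit, equate it with the sum of the enclosed singularity indices, and use that structurally stable singularities are wedges of index $+1/2$ and trisectors of index $-1/2$. The extra care you take in justifying that the orbit's index is $1$ (via the tangent orientation along $\Gamma$ and the angle-doubling bookkeeping) and in spelling out the additivity argument is sound detail that the paper simply asserts.
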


\begin{proof}
First, $\Gamma$ has index $1$ with respect to $\mathbf{l}$, i.e.,
$\ind_{\Gamma}\left(\mathbf{l}\right)=1$. Second, its index equals
the sum over all enclosed singularities, i.e.,
\begin{equation}
\sum_{i}\ind_{\Gamma}(p_{i},\mathbf{l})=\ind_{\Gamma}(\mathbf{l})=1.\label{eq:sum}
\end{equation}
Since we consider structurally stable singularities only, these are
isolated and of either wedge or trisector type. From \eqref{eq:sum},
we then obtain the equality
\[
\frac{1}{2}\left(W-T\right)=1,
\]
from which Eq.\ \eqref{eq:toporule_line} follows.
\end{proof}

Consequently, in the interior of any closed orbit of a structurally stable line field,
there are at least two singularities of wedge type, and exactly two
more wedges than trisectors. Thus, a closed orbit necessarily encircles
a wedge pair, and hence the existence of such a pair serves as a necessary
condition in an automated search for closed orbits in line fields.
In Fig.\ \ref{fig:topo_cycle}, we sketch two possible line field
geometries in the interior of a closed orbit.

\begin{figure}
\centering
\includegraphics[width=0.5\textwidth]{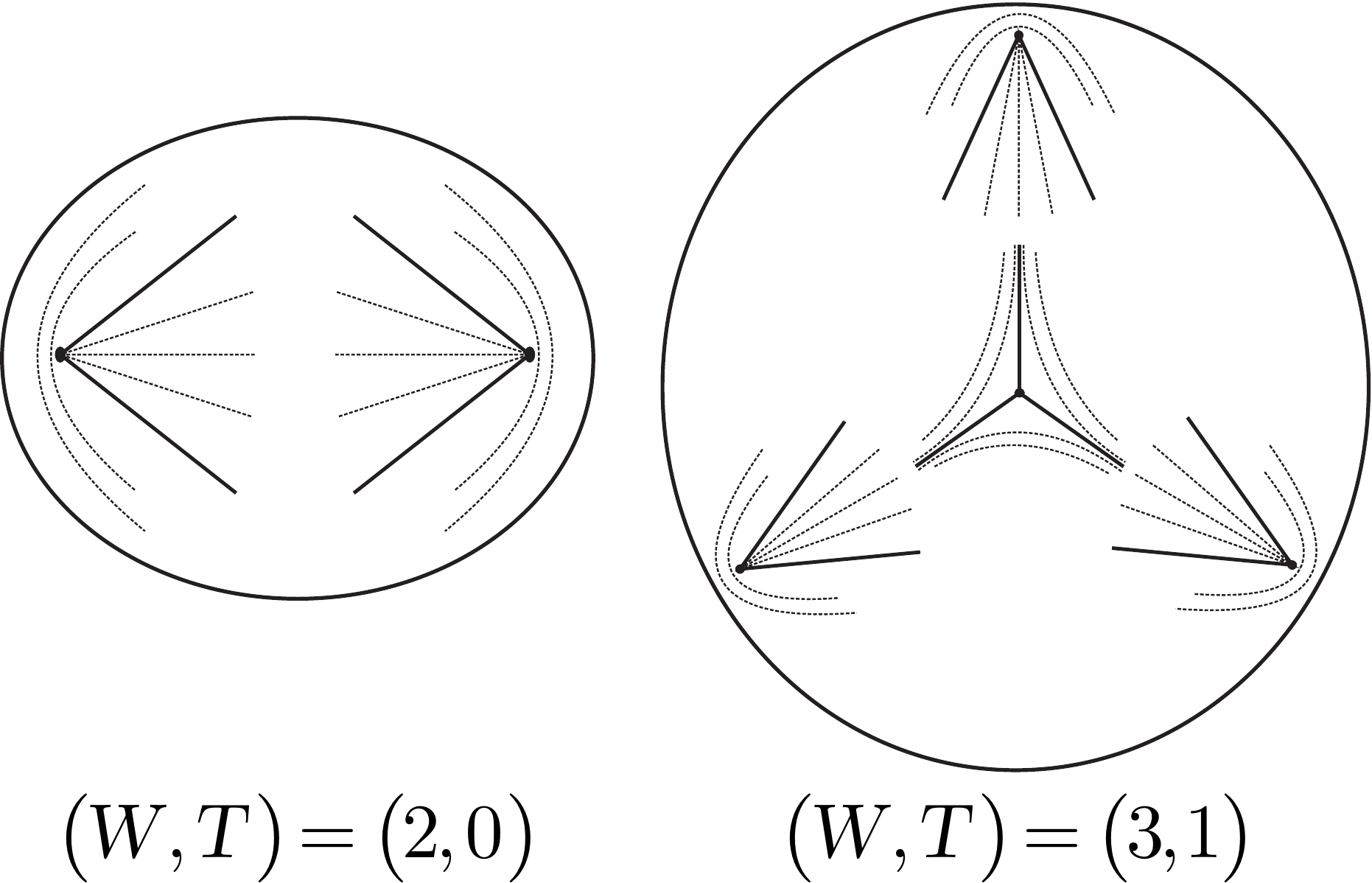}
\caption{Possible topologies inside closed orbits: the $\left(W,T\right)=(2,0)$
configuration (left) and the $(3,1)$ configuration (right). In practice,
we have only observed the simpler $(2,0)$ configuration.}
\label{fig:topo_cycle}
\end{figure}

\section{Application to coherent Lagrangian vortex detection}\label{sec:Application}

Finding closed orbits in planar line fields is the decisive step in
the detection of coherent Lagrangian vortices in a frame-invariant fashion
\cite{Haller2012,Beron-Vera2013,Haller2013a}. Before describing
the algorithmic scheme and showing results on ocean data, we briefly
introduce the necessary background and notation for coherent Lagrangian vortices.

\subsection{Flow map, Cauchy--Green strain tensor and \texorpdfstring{$\lambda$}{lambda}--line field}

We consider an unsteady, smooth, incompressible planar velocity field
$\mathbf{u}(t,\mathbf{x})$ given on a finite time interval $\left[t_{0},t_{0}+T\right]$,
and the corresponding equation of motion for the fluid,
\[
\dot{\mathbf{x}}=\mathbf{u}(t,\mathbf{x}).
\]
We denote the associated flow map by $\mathbf{F}_{t_{0}}^{t_{0}+T}$,
which maps initial values $\mathbf{x}_{0}$ at time $t_{0}$ to their
respective position at time $t_{0}+T$. Recall that the flow map
is as smooth as the velocity field $\mathbf{u}$. Its linearisation can be used to define
the \emph{Cauchy--Green strain tensor field}
\[
\mathbf{C}_{t_{0}}^{t_{0}+T}\coloneqq\left(D\mathbf{F}_{t_{0}}^{t_{0}+T}\right)^{\top}D\mathbf{F}_{t_{0}}^{t_{0}+T},
\]
which is symmetric and positive-definite at each initial value. The eigenvalues
and eigenvectors of $\mathbf{C}_{t_{0}}^{t_{0}+T}$ characterize the
magnitude and directions of maximal and minimal stretching locally
in the flow. We refer to these positive eigenvalues as $\lambda_{1}\leq\lambda_{2}$,
with the associated eigenspaces spanned by the normalized eigenvectors $\xi_{1}$
and $\xi_{2}$.

As argued by \cite{Haller2013a}, the positions of coherent Lagrangian
vortex boundaries at time $t_{0}$ are closed stationary curves of
the averaged tangential strain functional computed from $\mathbf{C}_{t_{0}}^{t_{0}+T}$.
All stationary curves of this functional turn out to be uniformly
stretched by a factor of $\lambda>0$ under the flow map $\mathbf{F}_{t_{0}}^{t_{0}+T}$.
These stationary curves can be computed as closed orbits of the \emph{$\lambda$--line
fields} $\eta_{\lambda}^{\pm}$, spanned by the representing vector
fields
\begin{equation}\label{eq:lambda-line-field}
\eta_{\lambda}^{\pm}\coloneqq\sqrt{\frac{\lambda_{2}-\lambda^{2}}{\lambda_{2}-\lambda_{1}}}\xi_{1}\pm\sqrt{\frac{\lambda^{2}-\lambda_{1}}{\lambda_{2}-\lambda_{1}}}\xi_{2}.
\end{equation}

We refer to orbits of $\eta_{\lambda}^{\pm}$ as \emph{$\lambda$--lines}.
In the special case of $\lambda=1$, the line field $\eta_{1}^{\pm}$
coincides with the \emph{shear line field} defined in \cite{Haller2012},
provided that the fluid velocity field $\mathbf{u}(t,\mathbf{x})$
is incompressible.

We refer to points at which the Cauchy--Green strain tensor is isotropic (i.e., equals a constant multiple of the identity tensor) as \emph{Cauchy--Green
singularities}. For incompressible flows, only $\mathbf{C}_{t_{0}}^{t_{0}+T}=\mathbf{I}$
is possible at Cauchy--Green singularities, implying $\lambda_{1}=\lambda_{2}=1$
at these points. The associated eigenspace fields, $\xi_{1}$ and
$\xi_{2}$, are ill-defined as line fields at Cauchy--Green singularities,
thus generically the line fields $\xi_{1}$, $\xi_{2}$ and $\eta_{1}^{\pm}$
have singularities at these points. Conversely,
the singularities of the line fields $\xi_{1}$, $\xi_{2}$ and $\eta_{1}^{\pm}$
are necessarily Cauchy--Green singularities, as seen from the local
vector field representation in Eq.\ \eqref{eq:lambda-line-field}.

Following \cite{Haller2012,Haller2013a}, we define an \emph{elliptic
Lagrangian Coherent Structure (LCS)} as a structurally stable closed orbit of $\eta_{\lambda}^{\pm}$
for some choice of the $\pm$ sign, and for some value of the parameter
$\lambda$. We then define a \emph{(coherent Lagrangian) vortex boundary}
as the locally outermost elliptic LCS over all choices of $\lambda$.

\subsection{Index theory for \texorpdfstring{$\lambda$}{lambda}--line fields}

In regions where $\lambda_{1}<\lambda^{2}<\lambda_{2}$ is not satisfied,
$\eta_{\lambda}^{\pm}$ is undefined. Such open regions necessarily arise
around Cauchy--Green singularities, and hence $\eta_{\lambda}^{\pm}$
does not admit isolated point-singularities. Consequently, the index
theory presented in \prettyref{sec:Index_lf} does not immediately
apply to the $\lambda$--line field. We show below, however, that
Cauchy--Green singularities are still necessary indicators of closed
orbits of $\eta_{\lambda}^{\pm}$ for arbitrary $\lambda$.

For $\lambda>1$, the set $D_{\lambda}^{2}=\left\{ \lambda_{2}<\lambda^{2}\right\}$,
on which $\eta_{\lambda}^{\pm}$ is undefined, consists of open connected
components. All Cauchy--Green singularities are contained in some
of these $D_{\lambda}^{2}$-components. A priori, however, there may
exist $D_{\lambda}^{2}$-components that do not contain Cauchy--Green
singularities.

On the boundary $\partial D_{\lambda}^{2}$, we have $\lambda_{2}=\lambda^{2}$,
and hence $\eta_{\lambda}^{\pm}$ coincides with $\xi_{2}$ on $\partial D_{\lambda}^{2}$,
as shown in Fig.\ \ref{fig:continuation}. Therefore, we may extend
$\eta_{\lambda}^{\pm}$ into $D_{\lambda}^{2}$ by letting $\eta_{\lambda}^{\pm}(\mathbf{x})\coloneqq\xi_{2}(\mathbf{x})$
for all $\mathbf{x}\in D_{\lambda}^{2}$, thereby obtaining a continuous,
piecewise differentiable line field, whose singularity positions coincide
with those of the $\xi_{2}$-singularities.

\prettyref{thm:topo_line} applies directly to the continuation of
the line field $\eta_{\lambda}^{\pm}$, and enables the detection
of closed orbits lying outside the open set $D_{\lambda}^{2}$. In
the case $\lambda<1$, the line field $\eta_{\lambda}^{\pm}$ can
similarly be extended in a continuous fashion into the interior of
the set $D_{\lambda}^{1}=\left\{ \lambda_{1}>\lambda^{2}\right\} ,$
through the definition $\eta_{\lambda}^{\pm}(\mathbf{x})\coloneqq\xi_{1}(\mathbf{x})$
for all $\mathbf{x}\in D_{\lambda}^{1}$.

\begin{figure}
\centering
\includegraphics[width=0.45\textwidth]{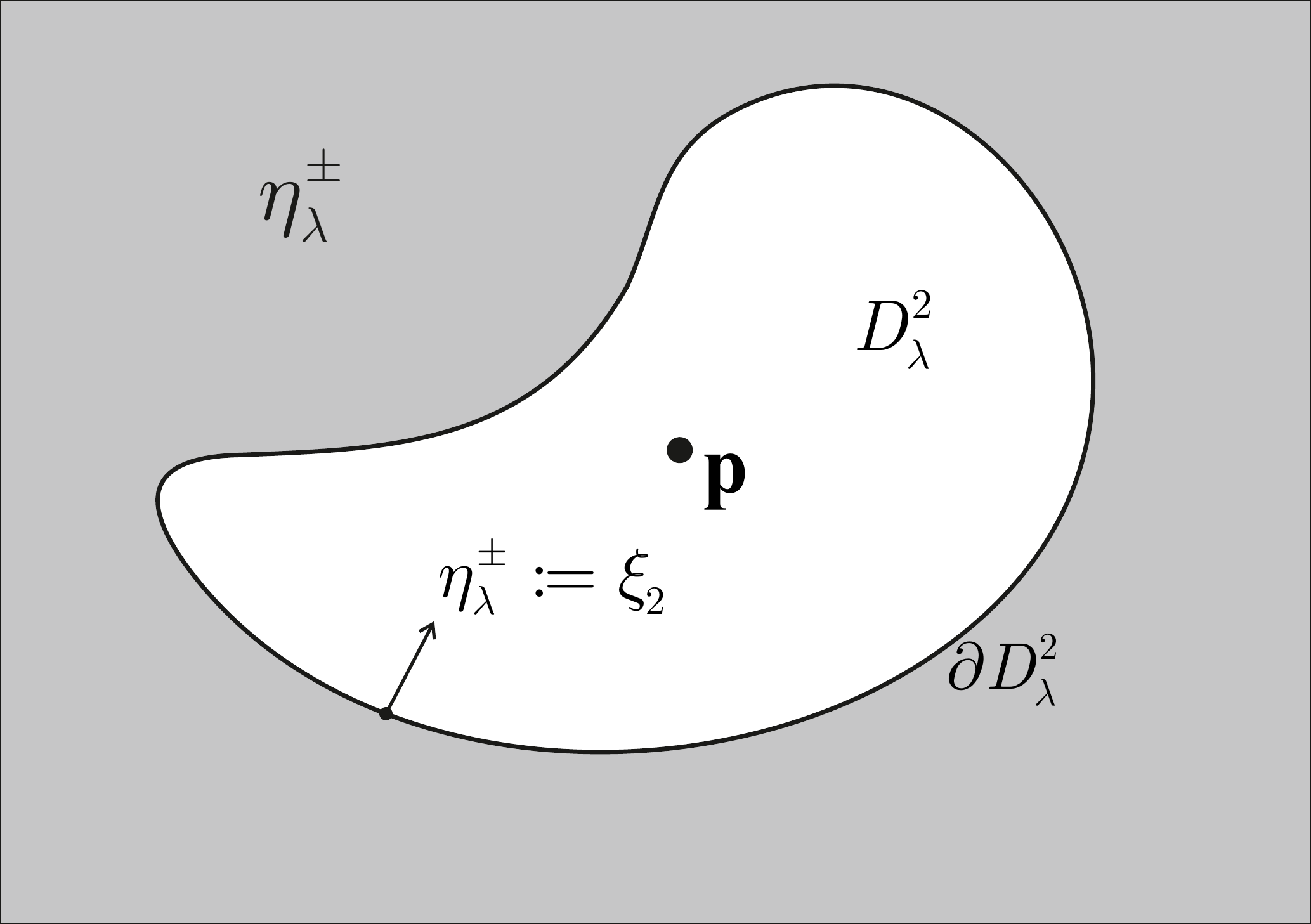}
\caption{The original domain of definition of $\eta_{\lambda}^{\pm}$ (grey), and the domain $D_{\lambda}^{2}$ (white), to which $\eta_{\lambda}^{\pm}$ can be con\-tin\-u\-ously extended via
the line field $\xi_2$. Also shown is a point $\mathbf{p}$ denoting a Cauchy--Green
singularity.}
\label{fig:continuation}
\end{figure}

After its extension into the set $D_{\lambda}=D_{\lambda}^{1}\cup D_{\lambda}^{2}$,
the line field $\eta_{\lambda}^{\pm}$ inherits each Cauchy--Green
singularity either from $\xi_{2}$ or from $\xi_{1}$. A priori, the
same Cauchy--Green singularity may have different topological types
in the $\xi_{1}$ and $\xi_{2}$ line fields. By \cite[Theorem 11]{Delmarcelle1994a},
however, this is not the case: corresponding generic singularities
of $\xi_{2}$ and $\xi_{1}$ share the same index and have the same
number of hyperbolic sectors. Furthermore, the separatrices of the
$\xi_{2}$-singularity are obtained from the separatrices of the $\xi_{1}$-singularity
by reflection with respect to the singularity. In summary, $\xi_{1}$-wedges
correspond exactly to $\xi_{2}$-wedges, and the same holds
for trisectors. For the singularity type classification for $\eta_{\lambda}^{\pm}$,
$\lambda\neq1$, we may therefore pick $\xi_{2}$, irrespective of
the sign of $\lambda-1$.

The singularity-type correspondence extends also to the limit case
$\lambda=1$, i.e., to $\eta_{1}^{\pm}$, as follows. Consider the
one-parameter family of line-field extensions $\eta_{\lambda}^{\pm}$.
By construction, the locations of $\eta_{\lambda}^{\pm}$ point singularities
coincide with those of the $\xi_{2}$-singularities for any $\lambda$.
Variations of $\lambda$ correspond to continuous line-field perturbations,
which leave the types of structurally stable singularities
unchanged. Hence, the types of $\eta_{1}^{\pm}$-singularities must
match the types of corresponding $\eta_{\lambda}^{\pm}$-singularities,
or equivalently of corresponding $\xi_{2}$-singularities. To summarize, we obtain the following conclusion.

\begin{proposition}\label{prop:eta-field}
Any closed orbit of a structurally stable $\eta_\lambda^\pm$ field necessarily encircles Cauchy--Green singularities satisfying Eq.\ \eqref{eq:toporule_line}.
\end{proposition}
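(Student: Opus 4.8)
The plan is to combine the continuous extension of $\eta_\lambda^\pm$ described in the preceding paragraphs with the topological rule of \prettyref{thm:topo_line}, and then to transfer the singularity count back from the extended field to the genuine Cauchy--Green singularities. First I would let $\Gamma$ be a structurally stable closed orbit of $\eta_\lambda^\pm$ for a fixed choice of sign and of $\lambda$. The orbit $\Gamma$ lies in the domain where $\eta_\lambda^\pm$ is genuinely defined, hence outside the open set $D_\lambda^2$ (for $\lambda>1$) or $D_\lambda^1$ (for $\lambda<1$); in the borderline case $\lambda=1$ one argues with the limiting field $\eta_1^\pm$. Replacing $\eta_\lambda^\pm$ on $D_\lambda$ by $\xi_2$ (resp.\ $\xi_1$) yields a continuous, piecewise differentiable line field $\tilde{\mathbf{l}}$ on a neighbourhood of $\overline{D}$, where $D$ is the interior of $\Gamma$, and this modification does not touch $\Gamma$ itself; in particular $\Gamma$ remains a closed orbit of $\tilde{\mathbf{l}}$.

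Next I would check that $\tilde{\mathbf{l}}$ satisfies the hypotheses of \prettyref{thm:topo_line}: it is continuous and piecewise differentiable by construction, and its point singularities are exactly the $\xi_2$-singularities (resp.\ $\xi_1$-singularities), which by assumption are isolated and of the structurally stable wedge or trisector type. Applying \prettyref{thm:topo_line} to $\tilde{\mathbf{l}}$ and $\Gamma$ gives $W=T+2$, where $W$ and $T$ count the wedges and trisectors of $\tilde{\mathbf{l}}$ inside $D$. It then remains to observe that these singularities of $\tilde{\mathbf{l}}$ are precisely the Cauchy--Green singularities encircled by $\Gamma$: every $\xi_2$- (or $\xi_1$-) singularity is a Cauchy--Green singularity by the local representation \eqref{eq:lambda-line-field}, and conversely every Cauchy--Green singularity in $D$ produces a singularity of $\xi_2$ (and of $\xi_1$) of the corresponding type. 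Hence the wedge/trisector count of $\tilde{\mathbf{l}}$ in $D$ equals the wedge/trisector count of the Cauchy--Green singularities in $D$, and \eqref{eq:toporule_line} holds for the latter.

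The main obstacle—already dealt with in the discussion preceding the proposition—is the well-definedness of the singularity \emph{type} of $\eta_\lambda^\pm$: a priori a Cauchy--Green singularity could be a wedge for $\xi_1$ but a trisector for $\xi_2$, or could change type as $\lambda$ varies, including in the limit $\lambda=1$. This is settled by invoking \cite[Theorem 11]{Delmarcelle1994a}, which guarantees that corresponding $\xi_1$- and $\xi_2$-singularities share index and number of hyperbolic sectors, together with the observation that changing $\lambda$ is a continuous perturbation of the line field that leaves structurally stable singularities unchanged. Once this type-consistency is in hand, the proof is essentially a bookkeeping argument: it reduces \prettyref{prop:eta-field} to \prettyref{thm:topo_line} applied to the extended field, with the only real content being the identification of the extended field's singularities with the Cauchy--Green singularities.
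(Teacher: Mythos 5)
Your proposal is correct and follows essentially the same route as the paper: the paper's justification of \prettyref{prop:eta-field} is precisely the preceding discussion, namely the continuous extension of $\eta_\lambda^\pm$ into $D_\lambda$ by $\xi_2$ (resp.\ $\xi_1$), the application of \prettyref{thm:topo_line} to the extended field, the identification of the extension's singularities with Cauchy--Green singularities, and the type-consistency argument via \cite[Theorem 11]{Delmarcelle1994a} together with continuity in $\lambda$ for the case $\lambda=1$. You have reconstructed all of these ingredients in the same order and with the same justifications.
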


\subsection{A simple example: coherent Lagrangian vortex in the double gyre flow}

We consider the left vortex of the double gyre flow \cite{Shadden2005},
defined on the spatial domain $[0,1]\times[0,1]$ by the ODE
\begin{align*}
\dot{x} & =-\pi A\sin(\pi f(x))\cos(\pi y),\\
\dot{y} & =\pi A\cos(\pi f(x))\sin(\pi y)\partial_{x}f(t,x),
\end{align*}
where
\[
f(t,x)=\varepsilon\sin(\omega t)x^{2}+\left(1-2\varepsilon\sin(\omega t)\right)x.
\]
We choose the parameters of the flow model as $A=0.2$, $\varepsilon=0.2$,
$\omega=\pi/5$, $t_{0}=0$, and $T=5\pi/2$.

In the $\lambda$--line field shown in Fig.\ \ref{fig:double_gyre}(a), we
identify a pair of wedge singularities. Any closed $\lambda$--line
must necessarily enclose this pair by \prettyref{prop:eta-field}. This prompts us to define a Poincar\'e
section through the midpoint of the connecting line between the two wedges.
For computational simplicity, we select the Poincar\'e section as horizontal.
Performing a parameter sweep over $\lambda$--values, we obtain the
outermost closed orbit shown in Fig.\ \ref{fig:double_gyre}(a) for a uniform
stretching rate of $\lambda=0.975$. Other non-closing orbits and
the $\lambda$--line field are also shown for illustration. In addition, we
show the line field topology around the wedge pair in the vortex core
in Fig.\ \ref{fig:double_gyre}(b).

In this simple example, the vortex location is known, and hence a
Poincar\'e section could manually be set for closed orbit detection
in the $\lambda$--line fields. In more complex flows, however, the
vortex locations are a priori unknown, making a manual search unfeasible.

\begin{figure}
\centering
\includegraphics[width=.95\textwidth]{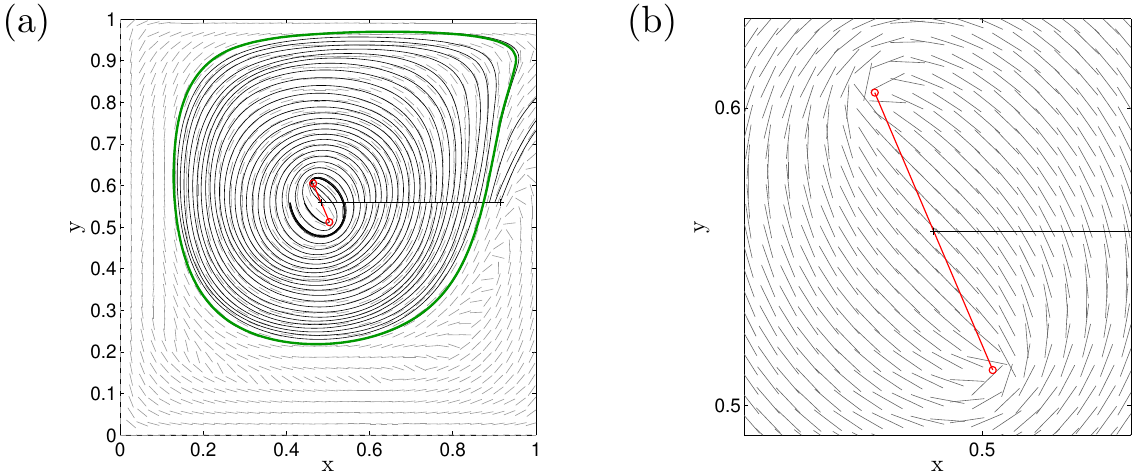}
\caption{(a) Vortex boundary ($\lambda=0.975$) for the left vortex of the
double gyre flow. In the centre, the pair of wedge singularities (red)
determines the topology of the $\lambda$--line field $\eta_{\lambda}^{-}$
(grey) and therefore indicates a candidate region for closed orbits.
The $\lambda$--lines (black) are launched from the Poincar\'e section
(black crosses) to find the outermost closed orbit (green). (b) A
blow-up of the centre of the vortex with the detailed circular topology
of the $\lambda$--line field $\eta_{\lambda}^{-}$ in the vicinity
of the $(2,0)$ wedge pair configuration (cf.\ Fig.\ \ref{fig:topo_cycle}).}
\label{fig:double_gyre}
\end{figure}

\subsection{Implementation for vortex census in large-scale ocean data}\label{sub:implementation}

Our automated Lagrangian vortex-detection scheme relies on \prettyref{prop:eta-field},
identifying candidate regions in which Poincar\'e maps for closed $\lambda$--line
detection should be set up. In several tests on ocean data, we only
found the $(W,T)=(2,0)$ singularity configuration inside closed $\lambda$--lines.
This is consistent with our previous genericity considerations. Consequently,
we focus on finding candidate regions for closed $\lambda$--lines
as regions with isolated pairs of wedges in the $\xi_{2}$ field.
In the following, we describe the procedure for an automated
detection of closed $\lambda$--lines.

\paragraph*{1. Locate singularities.}

Recall that Cauchy--Green singularities are points where\linebreak $\mathbf{C}_{t_{0}}^{t_{0}+T}=\mathbf{I}$.
We find such points at subgrid-resolution as intersections of the
zero level sets of the functions $c_{1}\coloneqq C_{11}-C_{22}$ and $c_{2}\coloneqq C_{12}=C_{21}$, where $C_{ij}$ denote the entries of the Cauchy--Green strain tensor.

\paragraph*{2. Select relevant singularities.}

We focus on generic singularities, which are isolated and are of wedge
or trisector type. We discard tightly clustered groups of singularities,
which indicate non-elliptic behavior in that region. Effectively,
the clustering of singularities prevents
the reliable determination of their singularity type. To this end,
we select a minimum distance threshold between admissible singularities
as $2\Delta x$, where $\Delta x$ denotes the grid size used in the
computation of $\mathbf{C}_{t_{0}}^{t_{0}+T}$. We obtain the distances
between closest neighbours from a Delaunay triangulation procedure.

\paragraph*{3. Determine singularity type.}

Singularities are classified as trisectors or wedges, following
the approach developed in \cite{Farazmand2013a}. Specifically, a
circular neighbourhood of radius $r>0$ is selected around a singularity,
so that no other singularity is contained in this neighbourhood. With a rotating
radius vector $\mathbf{r}$ of length $r$, we compute the absolute
value of the cosine of the angle enclosed by $\mathbf{r}$ and $\xi_{2}$,
i.e., $\cos\left(\angle\left(\mathbf{r},\xi_{2}\right)\right)=\left|\mathbf{r}\cdot\xi_{2}\right|/r$,
with the eigenvector field $\xi_{2}$ interpolated linearly to 1000
positions on the radius $r$ circle around the singularity. The singularity
is classified as a trisector, if $\mathbf{r}$ is orthogonal to $\xi_{2}$
at exactly three points of the circle, and parallel to $\xi_{2}$
at three other points, which mark separatrices of the trisector (Fig.\ \ref{fig:deg_points}).
Singularities not passing this test for trisectors are classified
as wedges. Other approaches to singularity classification can be found
in \cite{Delmarcelle1994} and \cite{Bazen2002}, which we have found
too sensitive for oceanic data sets.

\paragraph*{4. Filter}

We discard wedge points whose closest neighbour is of trisector-type,
because these wedge points cannot be part of an isolated wedge pair.
We further discard single wedges whose distance to the closest wedge
point is larger than the typical mesoscale distance of $2\,^{\circ}\approx200$\,km.
The remaining wedge pairs mark candidate regions for elliptic LCS
(Fig.\ \ref{fig:coherent_eddies}(a)).

\paragraph*{5. Launch $\lambda$--lines from a Poincar\'e-section}

We set up Poincar\'e sections that span from the midpoint of a
wedge pair to a point $1.5\,^{\circ}$ apart in longitudinal direction
(Fig.\ \ref{fig:coherent_eddies}(b)). This choice of length for the
Poincar\'e section captures eddies up to a diameter of $3\,^{\circ}\approx300$\,km,
an upper bound on the accepted size for mesoscale eddies. For a fixed
$\lambda$--value, $\lambda$--lines are launched from 100 initial
positions on the Poincar\'e section, and the return distance $P(x)-x$
is computed. Zero crossings of the return distance function correspond
to closed $\lambda$--lines. The position of zeros is subsequently refined
on the Poincar\'e section through the bisection method. The outermost
zero crossing of the return distance marks the largest closed $\lambda$--line
for the chosen $\lambda$--value. To find the outermost closed $\lambda$--line over all $\lambda$--values,
we vary $\lambda$ from $0.85$ to $1.15$ in $0.01$ steps, and pick
the outermost closed orbit as the Lagrangian eddy boundary. During this process,
we make sure that eddy boundaries so obtained do enclose the two wedge
singularities used in the construction, but no other singularities.

The runtime of our algorithm is dominated by the fifth step, the integration of $\lambda$--lines,
as illustrated in Table \ref{tab:run-time} for the ocean data example in
the next section. This is the reason why our investment in the selection,
classification and filtering of singularities before the actual $\lambda$--line
integration pays off.

\subsection{Coherent Lagrangian vortices in an ocean surface flow}

We now apply the method summarized in steps 1-5 above to two-dimensional
unsteady velocity data obtained from AVISO satellite altimetry measurements.
The domain of the data set is the Agulhas leakage in the Southern
Ocean, represented by large coherent eddies that pinch off from the
Agulhas current of the Indian Ocean.

Under the assumption of a geostrophic flow, the sea surface height
$h$ serves as a streamfunction for the surface velocity field. In
longitude-latitude coordinates $(\varphi,\theta)$, particle trajectories
are then solutions of
\begin{align*}
\dot{\varphi} & =-\frac{\mathrm{g}}{\mathrm{R}^{2}f(\theta)\cos\theta}\partial_{\theta}h(\varphi,\theta,t), & \dot{\theta} & =\frac{\mathrm{g}}{\mathrm{R}^{2}f(\theta)\cos\theta}\partial_{\varphi}h(\varphi,\theta,t),
\end{align*}
where $\mathrm{g}$ is the constant of gravity, $\mathrm{R}$ is the
mean radius of the Earth, and $f(\theta)\coloneqq 2\Omega\sin\theta$
is the Coriolis parameter, with $\Omega$ denoting the Earth's mean
angular velocity. For comparison, we choose the same spatial domain
and time interval as in \cite{Beron-Vera2013,Haller2013a}. The integration
time $T$ is also set to $90$ days.

Fig.\ \ref{fig:coherent_eddies} illustrates the steps of the eddy
detection algorithm. From all singularities of the Cauchy--Green strain
tensor, isolated wedge pairs are extracted (Fig.\ \ref{fig:coherent_eddies}(a))
and closed orbits are found by launching $\lambda$--lines from Poincar\'e
sections anchored at those wedge pairs (Fig.\ \ref{fig:coherent_eddies}(b)).
Altogether, 14 out of the selected wedge pairs are encircled by closed orbits and, hence,
by coherent Lagrangian eddy boundaries (Fig.\ \ref{fig:coherent_eddies}(c)).
The reduction to candidate regions consistent with \prettyref{prop:eta-field}
leads to significant gain in computational speed. This is because
the computationally expensive integration of the $\lambda$--line
field is only carried out in these regions (Table \ref{tab:run-time}).
For comparison, the computational cost on a single Poincar\'e
section is already higher than the cost of identifying the candidate
regions. Note also that two regions contain three wedges, which constitute two
admissible wedge pairs. This explains how 78 wedges constitute 40 wedge pairs altogether.

\begin{figure}
\centering
\includegraphics[height=0.75\textheight]{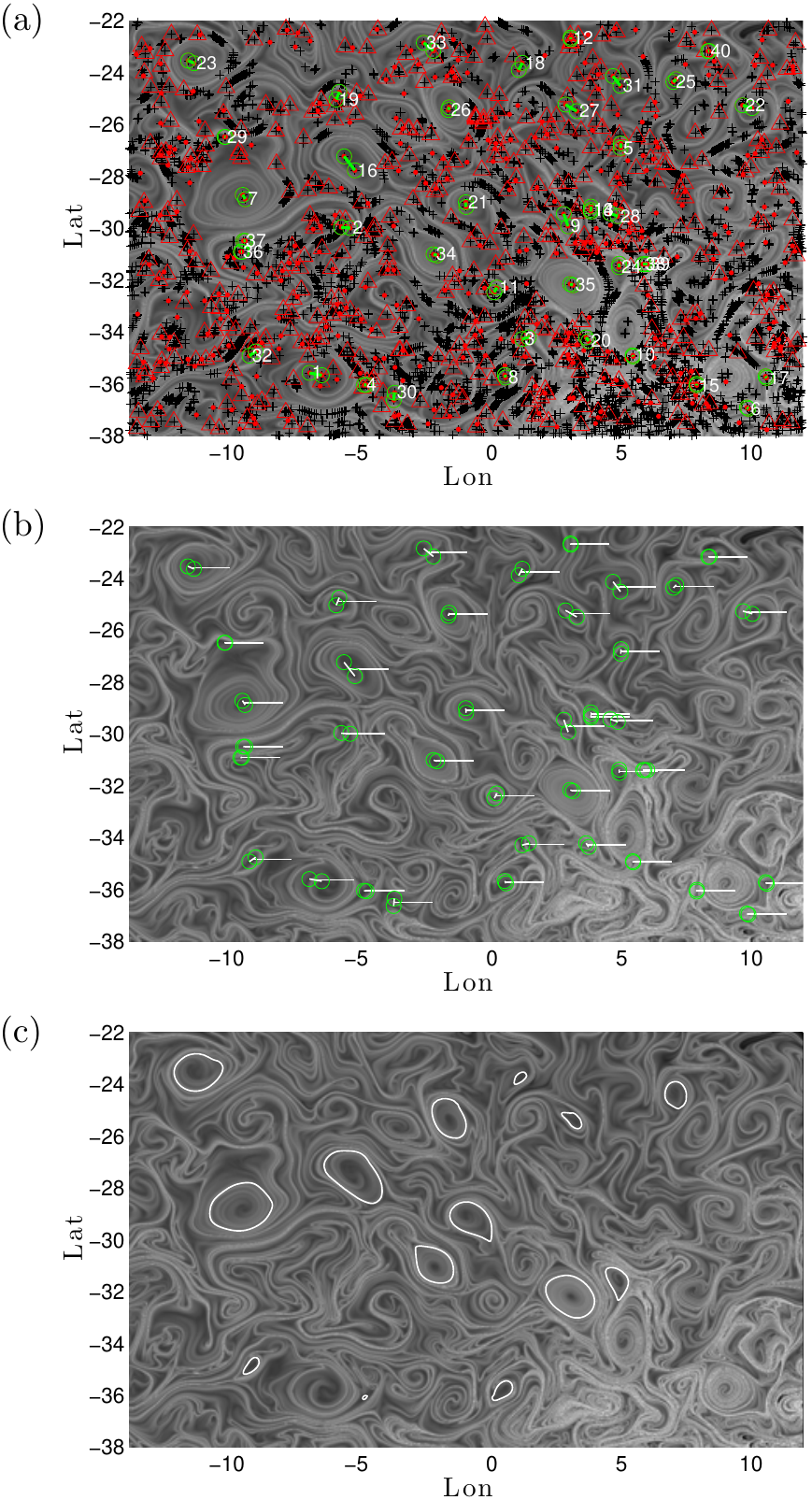}
\caption{Visualization of the eddy detection algorithm for an ocean surface flow. (a) Singularities of
the Cauchy--Green strain tensor of trisector type (red triangles)
and wedge type (green circles: kept, red dots: discarded). Wedge pairs
are candidate cores of coherent eddies. A total of 40 wedge pairs
were finally selected for further analysis out of all singularities
(black crosses) by the procedure described in Section \ref{sub:implementation}.
(b) Poincar\'e sections anchored at the centre of the selected wedge
pairs. Coherent vortex boundaries are found as closed $\lambda$--lines
intersecting these Poincar\'e sections. (c) Boundaries of 14 coherent
eddies on November 24, 2006. The $\log_{10}\lambda_{2}$ field is
shown in the background as an illustration of the stretching distribution
in the flow.}
\label{fig:coherent_eddies}
\end{figure}


\begin{table}
\centering
\begin{tabular}{|l|c|c|}
\hline
 & Runtime & Number of points\tabularnewline
\hline
\hline
1. Localization & $11.0$\,s & 14,211 singularities\tabularnewline
\hline
2. Selection & $12.8$\,s & 912 singularities\tabularnewline
\hline
3. Classification & $85.9$\,s & 414 wedges\tabularnewline
\hline
4. Filtering & $0.5$\,s & 78 wedges\tabularnewline
\hline
5. Integration & $\sim200$\,s / wedge pair / $\lambda$--value & 40 wedge pairs\tabularnewline
\hline
End result & --- & 14 eddies\tabularnewline
\hline
\end{tabular}
\caption{Runtime of the algorithm for the Agulhas data set on a CPU with 2.2\,GHz
and 32\,GB RAM. Since the integration of $\lambda$--lines is the
com\-pu\-ta\-tion\-al\-ly most expensive part, the reduction
of the number of candidate regions to only 40 by application of index
theory yields a significant computational advantage.}
\label{tab:run-time}
\end{table}

\section{Conclusion}\label{sec:Conclusions}

We have discussed the use of index theory in the detection of closed
orbits in planar line fields. Combined with physically motivated filtering
criteria, index-based elliptic LCS detection provides an automated
implementation of the variational results of \cite{Haller2013a}
on coherent Lagrangian vortex boundaries. Our results further enhance
the power of LCS detection algorithms already available in the Matlab
toolbox \textsc{LCS TOOL} (cf.\ \cite{Onu2014}).

Our approach can be extended to three-dimensional flows, where line
fields arise in the computation of intersections of elliptic LCS with
two-dimensional planes \cite{Blazevski2014}. Applied over several
such planes, our approach allows for an automated detection of coherent
Lagrangian eddies in three-dimensional unsteady velocity fields.

Automated detection of Lagrangian coherent vortices should lead to
precise estimates on the volume of water coherently carried by mesoscale
eddies, thereby revealing the contribution of coherent eddy transport
to the total flux of volume, heat and salinity in the ocean.
Related work is in progress.

\section*{Acknowledgment}

The altimeter products used in this work are produced by SSALTO/DUACS
and distributed by AVISO, with support from CNES (\url{http://www.aviso.oceanobs.com}).
We would like to thank Bert Hesselink for providing Ref.\ \cite{Delmarcelle1994a},
Xavier Tricoche for pointing out Refs.\ \cite{Wischgoll2001,Wischgoll2006},
and Ulrich Koschorke and Francisco Beron-Vera for useful comments.

\bibliographystyle{plain}

\end{document}